\newtheorem{theorem}{Theorem}[section]
\newtheorem{definition}[theorem]{Definition}
\newtheorem{proposition}[theorem]{Proposition}
\newtheorem{remark}[theorem]{Remark}
\newcommand{\C}{{\mathcal C}}
\newcommand{\J}{{\mathcal J}}
\begin{document}

\author{Peter M. Hines}
 
 
 \title{Identities in modular arithmetic from \\ reversible coherence operations}

\maketitle
\begin{abstract}
This paper investigates some issues arising in categorical models of reversible  logic and computation. Our claim is that the structural (coherence) isomorphisms of these categorical models, although generally overlooked, have decidedly non-trivial computational content. The theory of categorical coherence is based around reversible structural operations (canonical isomorphisms) that allow for transformations between related, but distinct, mathematical structures.  A number of {\em coherence theorems} are commonly used to treat these transformations as though they are identity maps, from which point onwards they play no part in computational models. We simply wish to point out that doing so overlooks some significant computational content.

We give a single example (taken from an uncountably infinite set of similar examples, and based on structures used in models of reversible logic and computation) of a category whose structural isomorphisms manipulate modulo classes of natural numbers. We demonstrate that the coherence properties that usually allow us to ignore these structural isomorphisms in fact correspond to countably infinite families of non-trivial identities in modular arithmetic.  Further, proving the correctness of these equalities without recourse to the theory of categorical coherence appears to be a hard task. 
\end{abstract}

\section{Introduction}\label{intro} 
\subsection{Historical background}
In \cite{LL}, J.-Y. Girard introduced {\em Linear Logic}, a striking new decomposition of classical logic. By contrast to previous approaches to logic, it was based around the twin related principles of {\em reversibility} and {\em resource-sensitivity}. Although the structural operations of copying and contraction (i.e. deletion against a copy) were not completely abandoned (as in sub-structural logics \cite{FP}), they were severely restricted. Via the Curry-Howard isomorphism \cite{MSPU} (also known as the `proofs-as-programs' correspondence) linear logic also has a close connection with reversible and resource-sensitive versions of computing systems such as lambda calculus and combinatory logic \cite{AHS}.

The computational interpretation was pushed further in the {\em Geometry of Interaction} program \cite{GOI0}, giving related models of linear logic \cite{GOI1,GOI2} (see also \cite{DR}). Although these models were degenerate in the logical sense (they identified conjunction with disjunction, and existential quantification with universal quantification) their computational content remained, as demonstrated by a series of practical computational interpretations in \cite{GOI2}. (As shown later \cite{PH03}, the dynamical part of the Geometry of Interaction system was implemented using precisely the same tools required to model reversible (space-bounded) Turing machines).

A significant challenge for logicians at this point was to give categorical models of both Linear Logic and the (related but distinct) Geometry of Interaction system, following the close correspondence between logics / type systems, and closed categories pioneered by \cite{LS}. For the purposes of this paper, we concentrate on the more computationally oriented Geometry of Interaction.

Several authors \cite{SA96,PH97,JSV} noted that the dynamical, or computational, part of the Geometry of Interaction system was  a form of {\em compact closure} \cite{KL} arising from categorical constructions \cite{JSV,SA96} on the category of partial reversible functions. As pointed out in \cite{PH97,PH99,AHS} (and implicit in \cite{GOI1}), the Geometry of Interaction is an essentially untyped (in the sense of $\lambda$-calculus) reversible computational system --- this is a consequence of the requirements of reversibility and resource-sensitivity. Any categorical interpretation must take this into account.

\subsection{The purpose of this paper}
The purpose of this paper is simply to point out some previously overlooked, decidedly non-trivial, computational content that arises in these models (in fact, familiarity with the logical models and computational systems listed above is not a requirement for understanding this paper -- but does help place the theory firmly within its historical context). The computational content comes, not from the dynamics of the GoI system (i.e. compact closure in categories of partial reversible functions), but simply from the fact that the system in question is untyped. Categorically, a model of an untyped system is a category with precisely one object (i.e. a monoid). Thus the GoI system is modelled within a monoid of partial reversible functions. 

In categorical logic / categorical models of computation, it is standard to ignore completely a class of structural isomorphisms known as {\em coherence isomorphisms}. There is a formal justification for doing so --- any category with non-trivial structural isomorphisms is equivalent (in a very precise sense) to one with trivial structural isomorphisms \cite{MCL}. 

However, there is a subtlety that is often overlooked; the process of constructing this equivalent category with trivial structural isomorphisms involves modifying the collection of objects of the category (\& hence, by the correspondence between categories and logics pioneered in \cite{LS}, modifying the type system). An appendix to \cite{PH13} (see also \cite{PHarxiv}) makes clear what this means for untyped systems; the `equivalent' version with trivial structural isomorphisms has a countably infinite class of objects, and thus is no longer type-free. 

As type-freeness is such an essential component of the GoI system, we are thus forced to deal with these structural isomorphisms -- this paper studies a set of such isomorphisms that arise implicitly in \cite{GOI1}. We demonstrate that, although  the category itself has only one object, modulo classes of integers play the same r\^ole as objects in this untyped setting. Thus, the structural isomorphisms correspond to (highly non-trivial) identities in modular arithmetic. Further, the classic theory of coherence that usually allows us to ignore structural isomorphisms completely in this case allows us to derive infinite sets of identities in modular arithmetic, essentially for free.

\subsection{Categorical identities up to isomorphism}
In category theory, especially the theory of monoidal categories, {\em coherence isomorphisms} are reversible structural operations that transform objects of categories (frequently, concrete mathematical structures) into isomorphic objects that differ only by a simple structural equivalence. 

The canonical example, of course, is {\em associativity}, where for foundational reasons one must replace the strict identity $X\otimes (Y\otimes Z) = (X\otimes Y)\otimes Z$ by a pair of mututally inverse isomorphisms \[ \xymatrix{ X\otimes (Y\otimes Z) \ar@/^9pt/[r]^{\tau_{X,Y,Z}} &  (X\otimes Y)\otimes Z \ar@/^9pt/[l]^{\tau_{X,Y,Z}^{-1}}} \] 
These natural isomorphisms are required to satisfy a family of {\em coherence conditions} that ensure that any such re-bracketing is both reversible and confluent. 

The distinction between a strict structural property (based on equality) and one that holds up to isomorphism is subtle, and a variety of coherence theorems \cite{MCL} tell us that for all practical purposes, we may ignore this subtlety, and treat properties such as associativity as though they are strict. 
However, a passing comment in the appendix of \cite{PH13} (expanded upon in a talk given by the author at Dagstuhl Seminar 12352, `Information Flow and its Applications' \cite{DAG}) observes that in various settings, these structural isomorphisms are concrete reversible arithmetic operations and the very coherence theorems used to ignore them have non-trivial computational content.  

This paper expands upon these observations via a simple representative example. We give an untyped (i.e.  single-object) unitless monoidal category whose structural isomorphisms are based on modular arithmetic, and then describe the significant computational advantage that the theory of categorical coherence provides in decisions procedures for equality of such reversible operations. In particular, we demonstrate that categorical diagrams based on $k$ distinct nodes correspond to arithmetic identities over equivalence classes of the form $\{ 2^k.\mathbb N + x  \}_{x=0\ldots 2^k-1}$.  Despite this, the coherence theorem for associativity  provides, for free, a large (countably infinite) class of arithmetic identities over such modular clases that are guaranteed to be correct. At least to the author, these identities are not readily apparent simply from their algebraic description. 

\subsection{MacLane's coherence theorem for associativity, and untyped monoidal categories}
MacLane's coherence theorem for associativity is commonly, although incorrectly, described as stating that `all diagrams built from coherence isomorphisms commute'.  This is a correct characterisation of the more technical result in some, but certainly not all,  cases (in particular simple calculations will demonstrate that it does not hold the for constructions of this paper). The distinction becomes important when the objects of the category do not satisfy a `freeness' condition with respect to the monoidal tensor, leading to what \cite{MCL} refers to as {\em undesirable identifications between objects}. 
Thus, when the class of objects is not only a set, but is {\em finite}, the informal characterisation above can never coincide with the formal statement of the theorem. 

This paper presents a rather extreme example of this:  we exhibit a small (unitless) symmetric monoidal category with exactly one object $N$ satisfying the equality\footnote{Note that this is strict equality, rather than isomorphism. For category-theorists worried about foundational questions related to a notion of equality between objects, we emphasise that this is a {\em small} category. Although $N\otimes (N\otimes N) = (N\otimes N)\otimes N$, this equality of objects does not imply that the corresponding associativity isomorphism is a strict identity.} $N\otimes N = N$, as in the example of J. Isbell  used by MacLane to motivate the notion of coherence up to isomorphism \cite{MCL} p. 160.

This `untyped' monoidal category is an example of a general construction introduced in \cite{PH97,PH99} -- see also \cite{PH13,PHarxiv}. As demonstrated in an Appendix to \cite{PH13}, there are uncountably many such untyped monoidal categories based on functions on $\mathbb N$ (in 1:1 correspondence with the interior points of the Cantor set, excluding a subset of measure zero), of which the one we present is merely the simplest.

We then describe which canonical diagrams of this category are predicted to commute by MacLane's coherence theorem for associativity, and demonstrate that these are non-obvious identities in modular arithmetic.  

 
\section{An untyped monoidal category}\label{sums}
We first give some simple arithmetic constructions on $\mathbb N$, based on arithmetic modulo $2^k$, for $k\in \mathbb N$, with a close connection to the theory of symmetric monoidal categories \cite{MCL}:

\begin{definition}\label{prelim}
Let us denote the monoid of bijections on the natural numbers by $\J$, and treat this as a single-object category. 
We define $\tau,\sigma\in \J = \J (\mathbb N,\mathbb N)$  as follows:
\[ \tau (n) =\left\{ \begin{array}{lr}
2n &  \ \ \ n\ (mod \ 2)=0, \\
n+1 &  \ \ \ n\ (mod \ 4)=1,  \\	
\frac{n-1}{2} &  \ \ \  n\ (mod \ 4)=3.  \\	
\end{array}\right.
\] 
\[ 
\sigma (n)  \ = \ \left\{\begin{array}{lr} 
n+1 & n \mbox{ even,} \\
n-1  & n \mbox{ odd.} 
\end{array} \right. 
\]
We also give an operation that, given two bijections on $\mathbb N$, returns another bijection. Given arbitrary 
$f,g\in \J$, 
we define 
\[ (f\star  g) (n) \ = \ \left\{\begin{array}{lr} 
2.f\left(\frac{n}{2}\right) & n \mbox{ even,} \\
 
2.g\left(\frac{n-1}{2}\right) + 1  & n \mbox{ odd.} 
\end{array} \right. 
\]
\end{definition}
The following properties of the above bijections and operations will be established via basic modular arithmetic. These properties are, as will be apparent, closely related to the structural properties and coherence conditions of symmetric monoidal categories:
\begin{proposition}\label{coherent}
Let $(\_ \star \_ ), \sigma ,\tau$ be as in Definition \ref{prelim} above. Then for all $f,g,h\in \J$, the following properties hold:
\begin{enumerate}
\item\label{1} {\bf Identities} $id  \star id  = id $
\item\label{2} {\bf Interchange} $(h\star k)(f\star g) = (hf\star kg)$
\item\label{3} {\bf Natural associativity} $\tau(f\star (g\star h))=((f\star g)\star h)\tau$
\item\label{4} {\bf Natural symmetry} $\sigma(g\star f) = (f\star g)\sigma$
\item\label{5} {\bf Pentagon} $\tau^2 = (\tau\star id )\tau (id \star \tau)$
\item\label{6} {\bf Hexagon} $\tau\sigma \tau = (\sigma \star id ) \tau (id  \star \sigma)$
\end{enumerate}
\end{proposition}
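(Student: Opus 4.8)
The plan is to trade the opaque piecewise definitions of Definition \ref{prelim} for a handful of composition relations in the two injections $e,o:\mathbb N\to\mathbb N$ given by $e(n)=2n$ and $o(n)=2n+1$. Their images are exactly the even and the odd numbers, so they partition $\mathbb N$; consequently two functions $\alpha,\beta:\mathbb N\to\mathbb N$ are equal \emph{if and only if} $\alpha e=\beta e$ and $\alpha o=\beta o$. This elementary splitting criterion, applied repeatedly, is the engine of the whole argument: it reduces each claimed identity to a finite symbol-pushing calculation rather than an infinite case analysis over residues.

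First I would unwind the definitions into the relations
\[ \sigma e=o,\qquad \sigma o=e,\qquad (f\star g)\,e=ef,\qquad (f\star g)\,o=og, \]
\[ \tau e=ee,\qquad \tau oe=eo,\qquad \tau oo=o. \]
Each is an immediate reading of the case definition (for instance $\tau(o(e(m)))=\tau(4m+1)=4m+2=e(o(m))$ yields $\tau oe=eo$), so this step is precisely the promised ``basic modular arithmetic'' and presents no difficulty. Observe that $\star$ is resolved by one split and $\sigma$ by one split, whereas $\tau$ needs up to two, since $\tau o$ on its own is \emph{not} a relation and must be split further into $\tau oe$ and $\tau oo$.

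With these seven relations in hand, each of (\ref{1})--(\ref{6}) is proved by the same mechanical routine: precompose both sides with $e$ and with $o$, rewrite the injections rightward through the relations above, and iterate until both sides collapse to a common normal form of the shape ``a word in $e,o$, possibly followed by a composite of the opaque bijections $f,g,h$''. For the identity and interchange laws (\ref{1}), (\ref{2}) and for natural symmetry (\ref{4}) a single split suffices; natural associativity (\ref{3}) and the hexagon (\ref{6}) require two levels; and the pentagon (\ref{5}) requires three, matching the four objects nominally involved. At each leaf the two normal forms are visibly identical, treating $f,g,h$ as formal symbols.

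The one genuinely delicate point, and where I expect a hand computation to be most error-prone, is the bookkeeping for the pentagon and hexagon: one must track the nested occurrences of $\tau$ and of $\_\star\_$, remember that every $\tau o$ demands a further split, and verify that the recursion on word length is both exhaustive (the length-$k$ words in $e,o$ have images covering $\mathbb N$) and correctly halted once all associators have been consumed. I therefore expect the pentagon (\ref{5}) to be the main obstacle, since its deeper nesting produces the longest rewrite chains. Conceptually, however, nothing beyond the partition property of $e,o$ and the seven relations above is ever invoked -- which is exactly the structural uniformity that will later let MacLane's coherence theorem generate infinitely many further such identities for free.
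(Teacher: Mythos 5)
Your proof is correct, but it takes a genuinely different route from the paper's. The paper's own proof is precisely the brute-force computation you set out to avoid: it unwinds each composite into an explicit piecewise formula over residue classes (e.g.\ $(f\star(g\star h))(n)$ split over $n \pmod 4$, and $(id\star\tau)(n)$, $(\tau\star id)(n)$ over $n \pmod 8$) and compares closed forms case by case. Your calculus --- the injections $e,o$ with disjoint images covering $\mathbb N$, the seven relations $\sigma e = o$, $\sigma o = e$, $(f\star g)e = ef$, $(f\star g)o = og$, $\tau e = ee$, $\tau oe = eo$, $\tau oo = o$, and the splitting criterion that $\alpha=\beta$ iff $\alpha e=\beta e$ and $\alpha o=\beta o$ --- is mathematically equivalent, since a word of length $k$ in $e,o$ is exactly a residue class modulo $2^k$; and your relations and claimed split depths all check out (one split for Identities, Interchange, Symmetry; the covering word sets $\{e,\,oe,\,oo\}$ for associativity and the Hexagon; $\{e,\,oe,\,ooe,\,ooo\}$ for the Pentagon). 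What your version buys is that the arithmetic is confined to deriving the seven relations once, after which everything is purely equational --- mechanical and noticeably less error-prone. Indeed it catches a slip in the paper's own displayed Pentagon formula: for $n \equiv 1 \pmod 4$ the paper asserts $\tau^2(n) = n+2$, whereas $\tau^2(n) = \tau(n+1) = 2n+2$; your computation $\tau^2\, oe = eeo$ gives the correct value $8m+4$ at $n=4m+1$, and since both sides of the Pentagon rewrite to $eeo$ on that class, the proposition itself is unaffected. Your approach also makes structurally transparent why congruence classes play the r\^ole of objects, and it is essentially the inverse-semigroup-representation viewpoint that the paper only gestures at in its remarks. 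What the paper's approach buys, on the other hand, is the explicit modular-arithmetic identities themselves (e.g.\ $\tau\sigma\tau(n) = 2n+2$ for $n$ even), and exhibiting those concrete arithmetic formulas is the advertised point of the paper, so the two proofs serve somewhat different expository ends.
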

\begin{proof}$ $ \\
\begin{enumerate}
\item 
By definition, $(id \star id)(n) = \left\{\begin{array}{lr} 2\left( \frac{n}{2} \right)\ = \ n & n \mbox{ even, } \\  & \\ 2\left( \frac{n-1}{2} \right) + 1\ =\ n & n \mbox{ odd. } \end{array}\right.$
\item Similarly, 
$ (h\star k)(f\star g) (n) = \left\{ \begin{array}{lr} (h\star k)\left( 2f\left(\frac{n}{2}\right)\right)  & n \mbox{ even, } \\  & \\ (h\star k) \left( 2g\left( \frac{n-1}{2}\right) +1  \right) & n \mbox{ odd. } \end{array}\right. $

Now observe that $2f\left( \frac{n}{2} \right)$ is always even, for arbitrary choice of $f\in {\bf Bij}(\mathbb N,\mathbb N)$ and even $n\in \mathbb N$. Similarly, $2g\left( \frac{n-1}{2}\right) +1$ is always odd, for arbitrary choice of $g\in {\bf Bij}(\mathbb N,\mathbb N)$ and odd $n\in \mathbb N$. Thus
\[ (h\star k)\left( (f\star g) (n)\right)  = \left\{ \begin{array}{lr}  2h\left( \frac{2f\left(\frac{n}{2} \right)}{2}\right) & n \mbox{ even, } \\   & \\
2k\left( 
	\frac{
		\left( 2g \left( \frac{n-1}{2} \right) +1 \right) -1}{2} \right) +1 & n \mbox{ odd.} \end{array} \right. \]
Simplifying this expression,
\[ (h\star k)(f\star g)(n) = (hf\star kg)(n)= \left\{ \begin{array}{lr} 2hf\left( \frac{n}{2} \right) & n \mbox{ even, } \\   2kg \left( \frac{n-1}{2} \right)+1 & n \mbox{ odd. }\end{array}\right. \]
\item We first establish explicit formul\ae\ for $f\star(g \star h)$ and $(f\star g)\star h$. 
By definition, 
\[ (f\star (g\star h))(n) =\left\{ \begin{array}{lr} 2f\left(\frac{n}{2}\right) & n \mbox{ even, } \\ (g\star h)\left(\frac{n-1}{2}\right) +1 & n \mbox{ odd. } \end{array}\right. \]
Unwinding the definition of $(g\star h)$,
\[ (g\star h)\left( \frac{n-1}{2} \right) \ = \ \left\{ \begin{array}{lr} 2g\left(\frac{n-1}{4}\right) & \frac{n-1}{2} \mbox{ even, } \\   & \\
										2h\left( 
												\frac{
													\left(\frac{n-1}{2}\right) -1 }
													{2}
												\right)
											+1 
										& \frac{n-1}{2} \mbox{ odd. }
							\end{array}\right.
\]
Thus 
\[ (f \star (g\star h))(n) = \left\{ 
\begin{array}{lr}
			2f\left( \frac{n}{2} \right) & \ \ \ \ n\ (mod \ 2) \ =\  0, \\	 
			2g\left( \frac{n-1}{4} \right) +1 & \ \ \ \  n\ (mod \ 4) \ = \  1, \\	 
			2h\left( \frac{n-3}{4} \right) + 3 & \ \ \ \  n \ (mod \ 4) \ = \ 3. 
	\end{array} \right. \]
Using similar reasoning,
\[ ((f\star g)\star h)(n) = \left\{ \begin{array}{lr} 
				4f\left(\frac{n}{4}\right) & \ \ \ \  n\ (mod \ 4) \ = \ 0 \\  
				4g\left(\frac{n-2}{4}\right)+2 & \ \ \ \  n\ (mod \ 4) \ = \ 2 \\  
				2h\left( \frac{n-1}{2}\right) +1 & \ \ \ \  n \ (mod \ 2) \ = \ 1
				\end{array}
			\right.
\]
From the explicit description of $\tau$, 
\[ 
\tau (f\star (g\star h)) = \left\{ \begin{array}{lr}  
				4f\left( \frac{n}{2}\right)	& \ \ \ \  n\ (mod\ 2) \ = \ 0 \\	 
				4g\left( \frac{n-1}{4}\right)+2	& \ \ \ \  n\ (mod\ 4) \ = \ 1 \\	 
				2h\left( \frac{n-3}{4}\right) +1& \ \ \ \  n\ (mod\ 4) \ = \ 3
				\end{array}\right.
			\]
and an almost identical calculation will verify that $((f\star g)\star h)\tau$ is given by the same formula.
\item Direct calculation gives that 
\[ \sigma(g\star f)(n)=(f\star g)\sigma(n) = \left\{ \begin{array}{lr}
				2f\left(\frac{n}{2}\right) + 1 & \ \ \ \  n\ (mod\ 2) = 0 \\ 
				2g\left(\frac{n-1}{2}\right) 	& \ \ \ \  n\ (mod\ 2) = 1 \end{array}\right. \]
\item We first describe the individual parts of the Pentagon equation:
\[ (id \star \tau)(n) =\left\{ \begin{array}{lr}
			n 	& \ \ \ \  n\ (mod\ 2) \ = \ 0 \\  		
			2n-1	& \ \ \ \  n\ (mod\ 4) \ = \ 1 \\  	
			n+2 	& \ \ \ \  n\ (mod\ 8) \ = \ 3 \\  	
			\frac{n-1}{2} 	& \ \ \ \  n\ (mod\ 8) \ = \ 7 
			\end{array}\right.
\] 
Similarly,
 \[ 
  (\tau \star id )(n) =\left\{ \begin{array}{lr}
		2n 			&  \ \ \ \ n\ (mod \ 4) \ = \ 0 \\ 	 
		n+2 			& \ \ \ \  n\ (mod \ 8) \ = \ 2 \\ 	 
		\frac{n+1}{2} 	&  \ \ \ \ n\ (mod \ 8) \ = \ 6 \\  
		n 			&  \ \ \ \ n\ (mod \ 2) \ = \ 1 \\ 
\end{array}\right.
 \]
Composing, on a case-by-case basis, gives 
\[ \tau^2(n)=(\tau\star id )\tau (id  \star \tau) (n) = 
\left\{ \begin{array}{lr} 
				4n &  \ \ \ \ n\ (mod \ 2) \ = \ 0 \\  
				n+2 &  \ \ \ \ n\ (mod \ 4) \ = \ 1 \\  
				\frac{n+1}{2} &  \ \ \ \ n\ (mod \ 8) \ = \ 3 \\  
				\frac{n-3}{4} &  \ \ \ \ n\ (mod \ 8) \ = \ 7 
	\end{array}
	\right.
	\]
\item For the hexagon equation, direct calculations (that by this stage, we are happy to leave as an exercise) demonstrate that 
\[ \tau\sigma\tau(n) = (\sigma \star id ) \tau (id  \star \sigma)(n) = 
\left\{ \begin{array}{lr}
			2n+2 &  \ \ \ \ n\ (mod\ 2) \ = \ 0 \\  
			\frac{n+1}{2} &  \ \ \ \ n\ (mod\ 4) \ = \ 1 \\  
			n-3 &  \ \ \ \ n\ (mod\ 4) \ = \ 3
\end{array}\right.
\]
\end{enumerate}
\end{proof}

\begin{remark}
$\mathcal J$ is a monoid --- a one-object, or single-typed, category.  Despite this, the above calculations demonstrate how the r\^ole of distinct objects in the theory of symmetric monoidal categories is instead played by certain subsets of $\mathbb N$ --- the congruence classes of the form $\{ 2^k.\mathbb N + x  \}_{x=0\ldots 2^k-1}$.  
\end{remark}

As demonstrated in Proposition \ref{coherent} above, $(\mathcal J, \star ,\tau ,\sigma)$ has all the structure of a symmetric monoidal category, except for the existence of a unit object. We axiomatise such situations as follows:

\begin{definition}\label{semicat}
Let $\C$ be a category. We say that $\C$ is {\bf semi-monoidal} when it satisfies all the properties for a monoidal category except for the requirement of a unit object --- i.e. there exists a {\bf tensor}
$( \ \Box \ ) : \C\times \C \rightarrow \C$
together with a natural object-indexed family of {\bf associativity isomorphisms}
$\{ \tau_{A,B,C} : A\Box (B\Box C)\rightarrow (A\Box B) \Box C \} _{A,B,C\in Ob(\C)}$
satisfying MacLane's pentagon condition
\[  (\tau_{A,B,C}\Box 1_D) \tau_{A,B\Box C,D}  (1_A \Box \tau_{B,C,D}) \ = \  \tau_{A\Box B,C,D} \tau_{A,B,C\Box D} \] 

When there also exists a natural object-indexed natural family of {\bf symmetry isomorphisms} $\{ \sigma_{X,Y}:X\Box Y\rightarrow Y\Box X \}_{X,Y\in Ob(\C) }$
satisfying MacLane's hexagon condition
\[  
\tau_{A,B,C}\sigma_{A\Box B,C}\tau_{A,B,C} \ = \ (\sigma_{A,C}\Box1_B) \tau_{A,C,B} (1_A\Box \sigma_{B,C}) \]
we say that $(\C,\Box ,\tau,\sigma)$ is a {\bf symmetric semi-monoidal category}. 

A functor $\Gamma : \C \rightarrow \mathcal D$ between two semi-monoidal categories $(\C ,\Box_\C )$ and $(\mathcal D,\Box_\mathcal D)$ is called (strictly) {\bf semi-monoidal} when $\Gamma (f\Box_\mathcal C g) = \Gamma(f) \Box_\mathcal D \Gamma(g)$. All monoidal categories are semi-monoidal, but not vice versa; the relationship is precisely analogous to that between monoids and semigroups. When a semi-monoidal category does not contain a unit object, we call it {\bf unitless monoidal}. 


When a semi-monoidal category has only one object, we call it {\bf untyped monoidal}, or simply {\bf untyped}. 

\end{definition}

\begin{theorem} The structure $(\mathcal J,\_ \star\_ , \tau,\sigma)$, as given in Definition \ref{prelim} is an untyped symmetric monoidal category.
\end{theorem}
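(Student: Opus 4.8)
The plan is to show that this theorem is essentially a corollary of Proposition \ref{coherent}, obtained by matching each clause of Definition \ref{semicat}, specialised to the one-object setting, against a property already established there. Since $\mathcal J$ is a single-object category whose morphisms are all of $\mathbf{Bij}(\mathbb N,\mathbb N)$ and whose composition is function composition, I first treat $(\_\star\_)$ as the candidate tensor. On objects there is nothing to check beyond recording the equality $N\Box N = N$. On morphisms I must verify that $\star$ actually lands in $\mathcal J$, i.e. that $f\star g$ is a bijection whenever $f,g$ are; this follows from the observation (already used in the proof of Interchange) that $n\mapsto 2f(n/2)$ restricts to a bijection of the even numbers, $n\mapsto 2g((n-1)/2)+1$ to a bijection of the odd numbers, and these two sets partition $\mathbb N$. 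Functoriality of $\star$ is then exactly Properties \ref{1} and \ref{2}: preservation of identities is $id\star id = id$, and preservation of composition is the Interchange law $(h\star k)(f\star g) = (hf\star kg)$, which is precisely the statement that $\star:\mathcal J\times\mathcal J\to\mathcal J$ respects the componentwise composition of the product category.

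Next I would confirm that $\tau$ and $\sigma$ are genuine isomorphisms of $\mathcal J$. As elements of the monoid of bijections they are automatically invertible, so the only thing to check is that the piecewise formulae of Definition \ref{prelim} do define bijections. For $\sigma$ this is immediate, since it is an involution swapping each even number with its odd successor. For $\tau$ a short case analysis suffices: the even inputs surject onto the multiples of $4$, the inputs congruent to $1\pmod 4$ onto the numbers congruent to $2\pmod 4$, and the inputs congruent to $3\pmod 4$ onto the odd numbers, and these three images partition $\mathbb N$.

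With $\star$ a bifunctor and $\tau,\sigma$ isomorphisms in hand, the remaining axioms are read off directly. Because $\mathcal J$ has a single object, each object-indexed family $\{\tau_{A,B,C}\}$ and $\{\sigma_{X,Y}\}$ collapses to a single morphism, so the associativity and symmetry data are just $\tau$ and $\sigma$. The naturality squares, which in general are indexed by triples (resp. pairs) of morphisms, here become the single equations $\tau(f\star(g\star h)) = ((f\star g)\star h)\tau$ and $\sigma(g\star f) = (f\star g)\sigma$, quantified over all $f,g,h\in\mathcal J$ --- that is, Properties \ref{3} and \ref{4}. Finally, MacLane's pentagon and hexagon conditions of Definition \ref{semicat}, once every subscript is erased in the one-object case, are exactly Properties \ref{5} and \ref{6}. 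Since the definition of a symmetric semi-monoidal, and hence untyped monoidal, category deliberately omits any unit requirement, there is nothing further to check, and the structure is an untyped symmetric monoidal category as claimed.

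I would flag the main obstacle as follows: almost all of the genuinely laborious content --- the pentagon and hexagon equalities, which look formidable --- is already discharged by Proposition \ref{coherent}, so the residual difficulty is conceptual rather than computational. The step most easily overlooked is the correct translation of the object-indexed naturality axioms of Definition \ref{semicat} into the unindexed equations of the one-object setting; one must be careful that quantifying the single components $\tau,\sigma$ over all morphisms really does capture full naturality. The only computation not literally contained in the proposition is the verification that $\star$ preserves the property of being a bijection and that $\tau$ is a bijection, both of which reduce to the same even/odd partition argument.
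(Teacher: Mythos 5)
Your proposal is correct and follows the same route as the paper, whose entire proof is the single line ``This follows from Proposition \ref{coherent} above.'' You have simply made explicit what that citation leaves implicit --- the clause-by-clause matching of the six properties against Definition \ref{semicat}, plus the routine well-definedness checks (that $f\star g$, $\tau$, and $\sigma$ are bijections) --- all of which is sound.
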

\begin{proof} This follows from Proposition \ref{coherent} above.
\end{proof}

\begin{remark} As observed in \cite{PH13}, we may construct similar structures based on congruence classes of the form $\{ p^k \mathbb N +x \}_{x=0\ldots p^k-1}$, for arbitrary $p\geq 2\in \mathbb N$, and  in general the untyped symmetric monoidal structures on the monoid of bijections on the natural numbers are in 1:1 correspondence with the interior points of the Cantor set (and thus are uncountably infinite).
We also refer to \cite{PH97,MVL} for many examples of these, given in terms of algebraic representations of inverse semigroups. As observed in the introduction, these are heavily used in models of  reversible computation and logic.
\end{remark}




\subsection{Coherence in unitless monoidal categories}
When working with semi-monoidal categories, it would be exceedingly useful to be able to rely on MacLane's coherence theorems, for both associativity and (when appropriate) symmetry.  A natural worry, therefore, is whether there is some exceedingly subtle interaction between the existence of a unit object, and the monoidal tensor, that means these theorems are not applicable in the absence of a unit object. 

Readers familiar with the proof of MacLane's coherence theorem for associativity will recall that associativity and the units conditions are treated individually, and so this is unlikely to be the case. A conclusive argument is provided by an Appendix to \cite{PHarxiv}, where the obvious procedure for adjoining a (strict) unit object to a semi-monoidal category is described, and proved to be adjoint to the equally obvious forgetful functor. Thus, a semi-monoidal category may be transformed into a monoidal category with no side-effects. 

Despite this, there is a subtlety about {\em untyped} monoidal categories that is worth observing. In \cite{MCL}, MacLane gives an argument, due to J. Isbell, for considering associativity up to canonical isomorphism, rather than up to strict identity. This argument was based on a denumerable object $D$ in the skeletal category of sets satisfying $D\otimes D = D$, and a proof that strict associativity at this object would force a collapse to a triviality (i.e. the unit object for this category). Isbell's argument was phrased in terms of a single category with categorical products --- an appendix to \cite{PH13} argues that this is the case in arbitrary untyped monoidal categories, and a full coherence result is given in \cite{PHarxiv}.

\subsection{Coherence in the untyped monoidal category $(\mathcal J,\star)$}
 In Section \ref{prelim}, we have seen that canonical isomorphisms for the untyped monoidal category $(\J,\star)$ are simply arithmetic expressions, built using modular arithmetic. Thus, it is possible (albeit frequently tedious and complex -- see also Section \ref{complexity}) to verify whether or not a diagram commutes by direct calculation.  Fortunately, we are also able to use MacLane's coherence theorem for associativity to derive --- from basic categorical principles --- a large class of diagrams that are guaranteed to commute, and thus a large class of number-theoretic identities that are guaranteed to be true. 

However, we are not able to use the common simplification of the associativity theorem --- valid in a wide range of settings --- that states {\em all canonical diagrams commute}. Since all arrows of $\mathcal J$ have the same source and target, this would imply that all arrows built recursively from the set $\left\{\tau, (\_ \star \_),(\ )^{-1}\right\}$ are equal, and this is clearly not the case! Instead, we must use the full statement of MacLane's theorem, in order to give a large class of diagrams that are guaranteed to commute.

The coherence theorem for associativity is based on the free monogenic monoidal category.  As we are interested in the unitless case, we work with this category, with the unit removed. Readers unhappy with this are invited to adjoin a unit object to $\mathcal J$, apply the coherence theorem for associativity, and then remove the unit object.
\begin{definition}
We define $(\mathcal W,\Box)$, the {\bf free monogenic semi-monoidal category}, to be precisely MacLane's free monogenic monoidal category \cite{MCL}, with the unit object removed.  An explicit description follows:,
\begin{itemize}
\item {(\bf Objects)} These are non-empty binary trees over a single variable symbol $x$. Thus, $x\in Ob(\mathcal W)$, and, for all $a,b\in Ob(\mathcal W)$, the formal string $a\Box b\in Ob(\mathcal W)$.
\item {(\bf Arrows)} Given $w\in Ob(\mathcal W)$, the {\bf rank} of $w$ is the number of  occurrences of  the symbol $x$ within the string $w$, so $rank(x)=1$, and $rank(w)\geq 1$ for arbitrary $w\in Ob(\mathcal W)$.  There then exists a unique arrow between any two objects $a,b$ of the same rank, which we denote $(b\leftarrow  a)\in \mathcal W(a,b)$.
\item {\bf (Composition)} The composite of two unique arrows is simply the unique arrow with the appropriate source / target. Thus, $(c\leftarrow b)(b\leftarrow a)=(c\leftarrow a)$. 
\item {\bf (Tensor)} On objects, the tensor of $a$ and $b$ is the formal string $a\Box b$. The definition on arrows must then be $(b,a)\Box (v,u)=(b\Box v,a\Box u)$.
\item {\bf (Associativity isomorphisms)} The canonical isomorphism from $a\Box(b\Box c)$ to $(a\Box b)\Box c$ is the unique arrow between these two objects. 
\end{itemize}
The arrows between objects of rank $n$ correspond to the rebracketings of binary trees with $n$ leaves, in the obvious way. 
\end{definition}

There is then a natural semi-monoidal functor, $Sub:(\mathcal W,\Box)\rightarrow (\mathcal J,\star)$, the (unitless version of the) {\em Substitution functor} of \cite{MCL} p. 162.  Expanding out the abstract definition gives the following characterisation of this functor: 
\begin{itemize}
\item $Sub(w)=\mathbb N$, for all $w\in Ob(\mathcal W)$.
\item $Sub(w\leftarrow w) = id_\mathbb N $
\item $Sub(a\Box v \leftarrow a\Box u)=id_\mathbb N  \star Sub(v\leftarrow u)$
\item $Sub(v\Box a \leftarrow u\Box a)=Sub(v\leftarrow u)\star id_\mathbb N$
\item $Sub(a\Box b)\Box c \leftarrow a\Box(b\Box c)=\tau$.
\end{itemize}

MacLane's theorem states that $Sub(\mathcal W,\Box)\rightarrow (\mathcal J,\star)$ is indeed a (semi-) monoidal functor, and thus any diagram over $(\mathcal J,\star)$ that is the image of a diagram over $(\mathcal W,\Box)$ under this functor is guaranteed to commute. This simple result gives a countably infinite set of diagrams that are guaranteed to commute (and thus a corresponding set of arithmetic identities that are guaranteed to hold). For example, in $(\mathcal J, \star ,\tau)$, the following diagram commutes:
\[ \xymatrix{
\mathbb N 	\ar[d]_\tau		&								&	\mathbb N	\ar[ll]_\tau \ar[rr]^{\tau^{-1}} \ar[d]|{\tau^{-1}\star \tau}	&										& \mathbb N  \ar[d]^{\tau^{-1}}	\\
\mathbb N 				&								&	\mathbb N	\ar[dl]^\tau \ar[dr]_{\tau^{-1}}					&										& \mathbb N  		\\
					&\mathbb N \ar[ul]|{(\tau(\tau\star 1))\star 1}	&													&	\mathbb N	\ar[ur]|{1\star(\tau^{-1}(1\star \tau))}	&		  		\\
}
\]
To prove that this commutes, simply note that it is the image of the following diagram over $(\mathcal W,\Box)$

\noindent
\scalebox{0.85}{
\xymatrix{
								& ((v\Box v)\Box v)\Box (v\Box v)  	\ar[d]	&	((v\Box v)\Box v)\Box (v\Box (v\Box v))	\ar[l] \ar[r] \ar[dd]		& (v\Box v)\Box (v\Box (v\Box v))  \ar[d]					& 	\\
								& (((v\Box v)\Box v)\Box v)\Box v			&													& v\Box (v\Box (v\Box (v\Box v)))						&  		\\
								& ((v\Box (v\Box v))\Box (v\Box v))\Box v \ar[u]	&		(v\Box (v\Box v))\Box ((v\Box v)\Box v)	\ar[l] \ar[r]		& v\Box ((v\Box v)\Box ((v\Box v)\Box v))\ar[u]				&		  		\\
}
}

(We do not label the arrows of this diagram, since they are uniquely determined by their source and target.  They may, of course, simply be thought of as re-bracketings of binary trees of rank 6).

\section{Number-theoretic identities via coherence}\label{complexity}

We have shown that MacLane's coherence theorem provides a countably infinite set of categorical diagrams that may be guaranteed to commute; however, the basic building blocks of these diagrams are the modular arithmetic operations of Definition \ref{prelim} --- thus the coherence theorem predicts identities within modular arithmetic. 
It is of course possible to verify that such diagrams, such as the above diagram,  commute, using modular arithmetic and a case-by-case analysis,  as in Section \ref{prelim}. However, to prove the following identities
\[ \tau^2=((\tau(\tau\star 1))\star 1)\tau (\tau^{-1}\star \tau) \ \mbox{ and } \tau^{-2} = (1\star (\tau^{-1}(1\star \tau^{-1})))\tau^{-1}(\tau^{-1}\star \tau) \]
as expressed by this diagram, would involve working with a case-by-case analysis of modulo classes of the form $\{ n \ (mod\ 32) = k\}_{k=0\ldots 31}$. 
The unfortunate referee assigned the task of verifying the calculations of Proposition \ref{coherent} will agree that this is a task to be avoided, if at all possible.

In general, a canonical diagram  with $N$ nodes may be the image of a diagram in $(\mathcal W,\Box)$ containing trees of depth $N$. An arithmetic check of the validity of this diagram may therefore require a case-by-case analysis that includes modulo classes  $\{ \mathbb N +x \ (mod\ K) \}_{x=0\ldots K}$, where $0\leq K < 2^N$. Clearly this is unfeasible, even for moderately large $N$.  However, when a diagram is indeed the image of a diagram in $(\mathcal W,\Box)$ 
the coherence theorem for associativity allows us to assert equality between all paths within the diagram that have the same source and target --- and thus the correctness of the (somewhat complicated) corresponding arithmetic identities. 

Checking that an arbitrary diagram is within the image of this functor (and thus commutes) may be seen intuitively to be a much simpler task. In Section \ref{nextPapers} below, we suggest that this task is in fact {\em linear}, instead of {\em exponential}.

\begin{remark}
As well as the modular arithmetic identities predicted by the coherence theorem for associativity, it may be observed that $(\mathcal J,\star)$ is a {\em symmetric} untyped monoidal category, and thus the theory of coherence of symmetry will predict an additional countably infinite set of identities. This is indeed correct, and coherence for other categorical properties (e.g. the distributivity of $\times$ over $\uplus$) also provide further sets of arithmetic identities. The study of these is work in progress. 
\end{remark}

\section{Conclusions and future work}\label{nextPapers}
We have demonstrated that, working within a simple representative arithmetic example,  MacLane's coherence theorem predicts the correctness of a countably infinite set of identities in modular arithmetic. As observed in the introduction, this particular category is simply the simplest possible example of an uncountably infinite set of similar untyped monoidal categories based on reversible arithmetic functions of the natural numbers. Thus, there appears to be considerable scope for deriving arithmetic and number-theoretic  identities from categorical first principles.

Of equal interest -- both in the category we give, or in any similar category --- is whether  we can go in the opposite direction; given a canonical diagram expressing some identities of modular arithmetic, 
 is there a partial or complete decision procedure that will tell us whether it is the image of some diagram under MacLane's substitution functor (and thus whether the arithmetic identities expressed are correct)?  We conjecture that not only is this the case, but that the complexity of this  decision procedure is linear in the number of edges of the diagram (this conjecture is based on an algorithm presented by the author at the conference \cite{DAG}, based on Robinson's unification algorithm \cite{JG}). 
 
 We also expect to find further applications in a number of other fields. In particular, 
 constructions similar to those of this paper were used in an algebraic setting to give full concrete representations of Thompson's $V$ and $F$ groups \cite{verus1,verus2}. Thus, any results or decision procedures  for the abstract categorical theory can reasonably be expected to find applications to the theory of these groups.

 \bibliographystyle{plain}
\bibliography{modular_refs}

\end{document}